\documentclass[intlimits]{amsart}
\usepackage[all]{xy}
\usepackage{amssymb}

\newcommand{\y}{{\bf y}}
\newcommand{\x}{{\bf x}}

\newcommand{\disc}{\operatorname{disc}}

\newcommand{\R}{{\bf R}}

\newcommand{\Q}{{\bf Q}}

\newcommand{\ord}{\operatorname{ord}}

\newcommand{\Z}{\bold{Z}}
\newcommand{\N}{\bold{N}}

\newcommand{\fo}{{\mathfrak o}}

\newcommand{\gen}{\operatorname*{gen}}
\newcommand{\A}{\mathbb{A}}

\newcommand{\genus}{\text{gen}}
\newcommand{\spn}{\operatorname*{spn}}
\newcommand{\Spin}{\operatorname{Spin}}
\newtheorem{theorem}{Theorem}
\newtheorem{lemma}[theorem]{Lemma}
\newtheorem{definition}[theorem]{Definition}

\newtheorem{corollary}[theorem]{Corollary}
\newtheorem{proposition}[theorem]{Proposition}

\parindent=0pt
\begin{document}
\title[Local conditions for global representations]{Local conditions for
global  representations of quadratic forms}  
\author[R. Schulze-Pillot]{Rainer Schulze-Pillot} 
\thanks{MSC 2000: Primary 11E12, Secondary 11E04, key words:
  representation of quadratic forms, integral quadratic forms}
\maketitle
\begin{abstract}
{We show that the theorem of Ellenberg and Venkatesh on representation
  of integral quadratic forms by integral positive definite quadratic
  forms is valid under weaker conditions on the represented form.}
\end{abstract}

In the article \cite{ev} Ellenberg and Venkatesh prove that for any
integral positive definite quadratic form $f$ in $n$ variables there
is a constant $C(f)$ such that
integral quadratic forms $g$  of square free
discriminant in $m\le n-5$ variables with minimum
$\mu(g)>C(f)$ are represented by  $f$ if and only if they are represented
by $f$ locally everywhere, i.e., over $\R$ and over all $p$-adic integers. 
If one fixes an odd prime $p$ not
dividing the discriminant of $f$ one can find
a constant $C'(f,p)$ such that representability is even guaranteed for
$g$ of rank $m\le
n-3$ with  $\mu(g)>C'(f,p)$, provided the discriminant of $g$ is further
restricted to be prime to $p$. It is mentioned in \cite{ev} that I have
suggested to replace the condition of square free discriminant on $g$
by a weaker condition.  This suggestion is worked out here. Combining
our version of the result of \cite{ev} with results of Kitaoka we
also obtain some new cases in which with a suitable fixed prime $q$
the only condition on $g$ (apart 
from $\mu(g)>C(f,q)$ and representability of $g$ by $f$ locally everywhere)
is bounded divisibility of the discriminant
of $g$ by $q$. Moreover, results on extensions of representations as
given in \cite{boe-ra,ckko} can be obtained with new dimension bounds. 
We
take the occasion to reformulate some of the proofs of \cite{ev} in
a way that is closer to other work on the subject.

\vspace{3mm}
We will work throughout in the language of lattices as described
e.g.\ in \cite{Ki,OM} (with the exception of Theorem
\ref{matrixversion}). We fix a totally 
real number field $F$ with ring of integers ${\mathfrak o}$ and a
totally positive definite quadratic space  $(V,Q)$ over $F$ of
dimension $n\ge 3$; the quadratic form $Q$ may be written as
$Q(x)=\langle x,x\rangle$ with a 
scalar product $\langle \quad,\quad\rangle$ on $V$.  By $O_V(F)$ we
denote the group of isometries of $V$ with respect to $Q$ (the
orthogonal group of the quadratic space $(V,Q)$), by $O_V(\A)$ its
adelization, by $SO_V(F)$ resp.\ $SO_V(\A)$ their subgroups of
elements of determinant $1$. For a lattice $\Lambda$ on $V$ we denote
its automorphism group (or unit group) $\{\sigma \in O_V(F) \mid
\sigma(\Lambda)=\Lambda\}$ by $O_{\Lambda}(\fo)$, similarly for the
local or adelic analogues. The minimum of $\Lambda$ is
$$\mu(\Lambda):=\min\{N^F_{\Q}(Q({\bf x}))\mid {\bf x} \in \Lambda, {\bf
  x}\ne {\bf 0}\}.$$  For the
question which lattices have large minimum it does not
matter whether we chose this definition or $\min\{Tr^F_{\Q}(Q({\bf
  x}))\mid {\bf x} \in \Lambda, {\bf   x}\ne {\bf 0}\}$ instead, see
the remark in \cite[p.139]{hkk}.  

\begin{definition} 
Let $\tilde{F}$ denote either $F$ or one of its completions $F_w$ at
a finite place $w$ of $F$ and  $\tilde{{\mathfrak o}}$
either ${\mathfrak o}$ or its completion ${\mathfrak o_w}$ at the same
place $w$, analogously we write $\tilde{V}$ for $V$ or its completion
$V_w$ at $w$. 
Let $\Lambda$ be an $\tilde{{\mathfrak o}}$-lattice of full rank $n=\dim_{\tilde{F}}(\tilde{V})$
on $\tilde{V}$ and $M$ an   $\tilde{{\mathfrak o}}$-lattice of rank $m$ on
a quadratic space $(\tilde{W}',Q')$ of dimension $m$ over
$\tilde{F}$.

\vspace{2mm}
The lattice {\em $M$ is represented by $\Lambda$} if there is an 
$\tilde{F}$-linear embedding $\varphi:\tilde{W}' \longrightarrow
\tilde{V}$ with 
$Q(\varphi(w))=Q'(w)$ for all $w \in \tilde{W}'$ (the embedding is {\em
  isometric}) for which $\varphi(M)\subseteq \Lambda$.

\vspace{2mm}
If $M$ is represented by $\Lambda$ through the isometric embedding
$\varphi$ we put
$\tilde{W}:=\varphi(\tilde{W}')$ and  say that 
the representation $\varphi$ is {\em primitive} if one has
$\tilde{W}\cap \Lambda=\varphi(M)$.\\
The representation
$\varphi$  of $M$ by
$\Lambda$ has  {\em imprimitivity bounded 
by $c \in \tilde{{\mathfrak o}}$} if $c v \in \varphi(M)$ for all $v \in
\tilde{W} \cap \Lambda$.
In  particular for $c=1$ a representation with imprimitivity bounded
by $1$ is the same as a primitive representation.

\vspace{2mm} We say that the $\fo$-lattice $M$ on the $F$-space $W'$
is represented 
(resp.\ primitively represented resp.\ represented with imprimitivity
bounded by $c \in \fo$) by the $\fo$-lattice $\Lambda$ on the
$F$-space $V$ locally everywhere if the completion $M_w$ is
represented (resp.\ primitively represented resp.\ represented with
imprimitivity bounded by $c$) 
by the completion $\Lambda_w$ for all (finite or infinite) places $w$ of $F$. 
\end{definition}    
{\it Remark.} Two representations $\varphi:M\longrightarrow
\Lambda,\,\psi:M\longrightarrow \Lambda'$ by lattices $\Lambda,
\Lambda'$ on $V$ of the $\fo$-lattice $M$ are
in the same genus of representations if they are in the same
$O_V(\A_F)$-orbit, i.\ e., if for every place $w$ of $F$ on has
$\rho_w \in O_V(F_w)$ with $\rho_w(\Lambda_w)=\Lambda'_w$ 
and $\psi_w=\rho_w \varphi_w$ (where $\varphi_w, \psi_w$
are the extensions of $\varphi, \psi$ to isometries from $M_w=\fo_wM$ to
$\Lambda_w=\fo_w\Lambda$ resp.  $\Lambda'_w=\fo_w\Lambda'$). Obviously
two representations in 
the same representation genus have the same primitivity/imprimitivity
properties.

In our setting the given local representations determine a genus of
representations of $M$ by lattices in the genus of $\Lambda$, and all
representations of $M$ that we are going to construct will lie in that genus of
representations.

\vspace{3mm}
As in \cite{ev} we have an action of the adelic orthogonal group
$O_V(\A)$ on the set of lattices on $V$, the isometry classes of
lattices in the orbit of $\Lambda$ under
this action are (by the Hasse-Minkowski theorem) precisely the isometry
classes of lattices in the  genus $\genus(\Lambda)$ of
$\Lambda$. The isometry classes of lattices in the orbit of
the subgroup $O'_V(\A)$ of adelic transformations of determinant and
spinor norm $1$  (or equivalently under the adelic version
$\Spin_V(\A)$ of the spin group of $V$) are the classes in  the spinor genus (or
spin genus) $\spn(\Lambda)$ of $\Lambda$. As usual we write
$\theta(\varphi)$ for 
the spinor norm of a special orthogonal transformation $\varphi$.

\begin{lemma}\label{spinrep}
Let $\Lambda$ be an ${\mathfrak o}$-lattice of rank $n$ on $V$ and $M$
an $\fo$-lattice on $W'$ of rank $m\le n-3$ and assume that $M$ is
represented by $\Lambda$ locally everywhere with imprimitivity bounded
by the number $c \in \fo$.

Then $M$ is represented (globally) with imprimitivity bounded by $c$
by some lattice $\Lambda'$ in the spinor genus of $\Lambda$.
\end{lemma}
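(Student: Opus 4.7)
The plan is to glue the given local representations $\varphi_w$ into a global one lying in $\spn(\Lambda)$, by first producing a representation by some lattice in $\genus(\Lambda)$ and then adjusting through the orthogonal group of a complement of $\psi(W')$ in $V$. The dimension hypothesis $m \le n-3$ is what makes the adjustment possible.

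First I would extract a global isometric embedding $\psi : W' \hookrightarrow V$. The existence of the isometric $\varphi_w : W'_w \hookrightarrow V_w$ at every place forces the local invariants of $W'$ and $V$ to be compatible, and Hasse--Minkowski then yields such a $\psi$. Set $W := \psi(W')$, so that $V = W \perp W^\perp$ with $\dim W^\perp = n-m \ge 3$. Next, Witt's theorem supplies, at each place $w$, an element $\eta_w \in O_{V_w}(F_w)$ with $\varphi_w = \eta_w \circ \psi_w$; at almost all finite places one can take $\eta_w \in O_{\Lambda_w}(\fo_w)$, so $(\eta_w)_w$ is a genuine adele. Defining $\Lambda^{(1)}$ by $\Lambda^{(1)}_w := \eta_w^{-1}(\Lambda_w)$ yields a lattice in $\genus(\Lambda)$ for which $\psi$ realizes a global representation of $M$, and the identity $\psi_w(W'_w) \cap \Lambda^{(1)}_w = \eta_w^{-1}\bigl(\varphi_w(W'_w) \cap \Lambda_w\bigr)$ shows directly that the imprimitivity bound $c$ is preserved.

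The key step is to move $\Lambda^{(1)}$ inside $\spn(\Lambda)$ without losing the embedding. For any $\mu_w \in O(W^\perp_w)$, extended to $V_w$ by the identity on $W_w$, the substitution $\eta_w \mapsto \eta_w \mu_w$ leaves $\varphi_w = \eta_w \psi_w$ unchanged (since $\mu_w$ fixes $\psi_w(W'_w)$ pointwise) and simply replaces $\Lambda^{(1)}_w$ by $\mu_w^{-1}\Lambda^{(1)}_w$, preserving both the containment of $\psi_w(M_w)$ and the imprimitivity bound. The position of $\Lambda^{(1)}$ within $\genus(\Lambda)$ relative to $\spn(\Lambda)$ is measured by the class of the spinor norm of the adele $(\eta_w)_w$ in the finite group $J_F / F^\times J_F^2\, \theta(O_\Lambda(\A))$. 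Because $\dim W^\perp \ge 3$, at every finite place $w$ the spinor norm $\theta : O(W^\perp_w) \to F_w^\times/(F_w^\times)^2$ is surjective, so a family $(\mu_w)_w$ that equals the identity outside a finite set of finite places can be chosen to annihilate this class, the archimedean components being automatically trivial by total positivity of $Q$. The lattice $\Lambda'$ transported from $\Lambda$ by the modified adele $(\eta_w \mu_w)_w$ then lies in $\spn(\Lambda)$ and still contains $\psi(M)$ with imprimitivity bounded by $c$.

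The main obstacle is this last step: identifying the spinor-genus obstruction correctly and verifying that the local spinor norms of $O(W^\perp_w)$ generate enough of the relevant idele class quotient to kill it. The dimension hypothesis $m \le n-3$ is used precisely here, through the surjectivity of the local spinor norm on $O(W^\perp_w)$.
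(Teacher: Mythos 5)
Your argument is correct and follows essentially the same route as the paper: Hasse--Minkowski to fix a global embedding, Witt's theorem to extend the local representations to local isometries, and surjectivity of the spinor norm on the orthogonal group of the $\ge 3$-dimensional complement to adjust the resulting adele into the spinor genus while fixing $\psi(W')$ pointwise. The only (cosmetic) difference is that the paper normalizes each local component to have spinor norm $1$ individually, whereas you phrase the correction as killing a single global class in the idele quotient; both are valid.
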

\begin{proof}
The same statement without the
condition on bounded imprimitivity is proved in
\cite{kneser_mz,hsia-pacific}, the proof here is essentially the same:

The Hasse-Minkowski theorem allows to assume $M\subseteq W'\subseteq
V$ and hence the set $T$ of places $w$ of $F$ for which $M_w
\not\subseteq \Lambda_w$ is finite and consists only of nonarchimedean
places. 
By adding  the finitely many places $w$ to $T$ for
which $M_w$ is not primitive in $\Lambda_w$ 
we can even assume that $M_w$ is a
primitive sublattice of $\Lambda_w$ for all $w
\not\in T$.

By assumption for the $w\in T$ we have isometric embeddings
$\varphi_w:W'\longrightarrow V$ such that $\varphi_w(M_w)\subseteq
\Lambda_w$ and $c \cdot \Lambda_w \cap \varphi_w(W'_w)\subseteq
\varphi_w(M_w)$ hold, and by Witt's theorem we can assume the
$\varphi_w$ to  be elements of the orthogonal group $O_V(F_w)$; it is
easy to see that we can assume them to have determinant $1$.  Since
the orthogonal complement $U_w$ of $\varphi_w(W'_w)$ in $V_w$ has
dimension $\ge 3$ its special orthogonal group contains
transformations of arbitrary spinor norm, see \cite{OM}. Choosing a
transformation $\psi_w\in SO_U(F_w)$ with
$\theta(\psi_w)=\theta(\varphi_w)$ and extending $\psi_w$ to $V_w$ by
letting it act as identity on $\varphi_w(W'_w)$ we obtain
$\rho_w=\psi_w\circ \varphi_w \in SO_V(F_w)$ with $\theta(\rho_w)=1$
and such that  $\rho_w(M_w)\subseteq
\Lambda_w$ and $c \cdot \Lambda_w \cap \varphi_w(W'_w)\subseteq
\rho_w(M_w)$ hold. The lattice $\Lambda'$ on $V$ defined by
$\Lambda'_w=\Lambda_w$ for $w \not\in T$ and
$\Lambda'_w=\rho_w^{-1}\Lambda_w$ for $w \in T$ is then in the spinor
genus of $\Lambda$ and contains $M$ with imprimitivity bounded by $c$.
\end{proof}

{\it Remark.} 
\begin{enumerate}
\item The argument allows to include congruence conditions on the
  representation of $M$ by $\Lambda'$: If $S$ is a finite set of
  non-archimedean places of $F$ and representations
  $\sigma_v:M_v\longrightarrow \Lambda_v$ are fixed for the $v\in S$,
  one can choose for any $s \in \N$ the representation
  $\rho:M\longrightarrow \Lambda'$ in such a way that 
$$\rho(m)-\sigma_v(m)\in p^s\Lambda'_v \text{ for all } v \in S$$
holds for all $m \in M$. This is proved as above, adding the places of
$S$ to the finite set $T$ of places chosen above.
\item If the quadratic space $(V,Q)$ is not totally definite
but indefinite at at least one archimedean place of $F$, the strong
approximation theorem for the spin group \cite{eichler_qfog} implies
that the spinor genera of lattices on $V$ coincide with the isometry
classes. In that case the question of global representations is
therefore completely settled by the Lemma, see
\cite{eichler_spin,kneser_mz,hsia-pacific}, 
and the only case needing further
attention is the totally definite case.
\end{enumerate}
For definite $(V,Q)$ we can use (following Eichler and Kneser) the
strong approximation theorem with respect to some finite place $w$ of
$F$ for  which the completion $V_w=V\otimes F_w$ is isotropic and
extend the lattices on $V$ to  ``arithmetically indefinite'' lattices
over the larger ring $\fo^{(w)}=F\cap F_w\prod_{v \ne
  w}\fo_w$; over this ring spinor genera and isometry classes coincide
again, and we obtain (similarly as in \cite{hkk}):
\begin{lemma}\label{p-rep}
With notations and assumptions as in Lemma \ref{spinrep} let $w$ be a
finite place of $F$ for which the completion $V_w=V\otimes F_w$ is
isotropic. 

Then every isometry class in the spinor genus of $\Lambda$ has a
representative $\tilde{\Lambda}\subseteq V$ with
$\tilde{\Lambda}_v=\Lambda_v$ for all finite places $v \ne w$
of $F$ and 
$\tilde{\Lambda}_w\in \Spin_V(F_w)\Lambda_w$.

Furthermore there is a lattice $\Lambda''$ in the spinor genus of $\Lambda$ with
$\Lambda''_v=\Lambda_v$ for all finite places $v \ne w$ of $F$ and
$\Lambda''_w\in \Spin_V(F_w)\Lambda_w$ such
that $M$ is represented by $\Lambda''$ with imprimitivity bounded by $c$.
\end{lemma}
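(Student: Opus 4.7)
The plan rests on strong approximation for $\Spin_V$ with respect to the place $w$, which is available because $V_w$ isotropic makes $\Spin_V(F_w)$ noncompact while $\Spin_V$ is simply connected and almost simple; this is the classical theorem of Eichler and Kneser.

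I would first set up the approximation data. Let $K_v\subseteq\Spin_V(F_v)$ be the stabilizer of $\Lambda_v$ for every nonarchimedean $v\ne w$, and set $K_v=\Spin_V(F_v)$ at each archimedean $v$; the latter is compact because $(V,Q)$ is totally positive definite. Then $K=\prod_{v\ne w}K_v$ is an open compact subgroup of $\Spin_V(\A^{(w)})$, and strong approximation yields
\[
\Spin_V(\A)=\Spin_V(F)\cdot K\cdot\Spin_V(F_w),
\]
where the last two factors are embedded in $\Spin_V(\A)$ as the identity away from their respective places.

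For the first assertion I would choose a representative $\Lambda'$ of the given isometry class lying in the adelic orbit $\Spin_V(\A)\Lambda$, so that $\Lambda'_v=g_v\Lambda_v$ for some $g=(g_v)\in\Spin_V(\A)$. Factor $g=\gamma\cdot k\cdot g'_w$ using the decomposition above, with $\gamma\in\Spin_V(F)$, $k\in K$ and $g'_w\in\Spin_V(F_w)$. Setting $\tilde\Lambda:=\gamma^{-1}\Lambda'$ preserves the isometry class, and at every $v\ne w$ one computes $\tilde\Lambda_v=\gamma^{-1}g_v\Lambda_v=k_v\Lambda_v=\Lambda_v$, while at $w$ one obtains $\tilde\Lambda_w=g'_w\Lambda_w\in\Spin_V(F_w)\Lambda_w$, as required.

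For the second assertion I would apply Lemma \ref{spinrep} to produce $\Lambda'\in\spn(\Lambda)$ with an isometric embedding $\varphi\colon W'\to V$ such that $\varphi(M)\subseteq\Lambda'$ and the local imprimitivity bounds $c\Lambda'_v\cap\varphi(W'_v)\subseteq\varphi(M_v)$ hold at all $v$. Then apply the first assertion to this $\Lambda'$, producing $\gamma\in\Spin_V(F)$ so that $\Lambda'':=\gamma^{-1}\Lambda'$ satisfies the localization constraints. The composition $\varphi'':=\gamma^{-1}\circ\varphi$ is an isometric embedding $W'\to V$ with $\varphi''(M)\subseteq\Lambda''$, and since $\gamma^{-1}$ is a global isometry carrying $\Lambda'_v$ to $\Lambda''_v$ and $\varphi(W'_v)$ to $\varphi''(W'_v)$, the imprimitivity bound $c$ transports place by place to the new representation. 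The main obstacle is purely bookkeeping: one must invoke strong approximation in the form that isolates the single noncompact place $w$ and absorbs everything else, including the archimedean factors, into the open compact subgroup $K$. The totally positive definite hypothesis on $(V,Q)$ enters precisely here, since it is what makes the archimedean components of $\Spin_V(\A)$ compact and therefore placeable inside $K$.
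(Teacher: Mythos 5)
Your proof is correct and follows essentially the same route as the paper: both arguments invoke strong approximation for $\Spin_V$ at the isotropic place $w$ to factor the adele carrying $\Lambda$ to the chosen spinor-genus representative as a global spin element times an adele stabilizing $\Lambda_v$ at every finite $v\ne w$, and then translate by the inverse of the global element. The only cosmetic difference is that the paper proves the second assertion first and remarks that the first follows by the same argument, whereas you derive the second from the first; the transport of the imprimitivity bound under the global isometry $\gamma^{-1}$ is handled correctly.
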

\begin{proof} To prove the second assertion 
we take first a lattice $\Lambda'$ in the spinor genus of $\Lambda$ as
guaranteed by Lemma \ref{spinrep} and may assume that $M$ is a
sublattice of $\Lambda'$ with imprimitivity bounded by $c$.  We write
$\Lambda'=\varphi \Lambda$ 
with an adele $\varphi=(\varphi_v) \in \Spin_V(\A_F)$. The strong
approximation theorem allows to write $\varphi=\sigma \psi$ with
$\sigma \in \Spin_V(F)$ and  $\psi=(\psi_v) \in \Spin_V(\A_F)$ with
$\psi_v\Lambda_v=\Lambda_v$ for all finite $v \ne w$.
Setting $\Lambda''=\sigma^{-1}\Lambda'=\psi\Lambda$ we have
$\Lambda''_v=\Lambda_v$ for all finite places $v \ne w$ of $F$, and
$\Lambda''$ contains the lattice $\sigma^{-1}(M)$, which is isometric
to $M$, with imprimitivity bounded by $c$. The same argument shows the
first assertion of the Lemma. 
\end{proof}
{\it Remark.} Again we may include congruence conditions at finitely
many non-archi\-me\-dean places $v\ne w$.
\vspace{3mm}
\begin{lemma}\label{orbits}
Let $w$ be a non-archimedean place of $F$ and $\Lambda_w$ be an
$\fo_w$-lattice of rank $n$ on $V_w=V\otimes F_w$. Let ${\mathcal W}$
be a set  of regular subspaces of $V_w$, put  $M_W:=W\cap \Lambda_w$ for $W
\in{\mathcal W}$ and assume that the (additive) $w$-adic valuation
$\ord_w(disc(M_W))$ of the discriminants of the $M_W$ is bounded by
some $j \in \N$ independent of $W$.

Then the set ${\mathcal W}$ is contained in the union of  finitely
many orbits under 
the action of the compact open subgroup 
$$\tilde{K}_w := \Spin_{\Lambda_w}(\fo_w)=\{ \tau \in \Spin_V(F_w)\mid 
\tau(\Lambda_w)=\Lambda_w\}$$ of $\Spin_V(F_w)$.
\end{lemma}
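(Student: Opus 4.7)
For $\tau\in O_{\Lambda_w}(\fo_w)$ and $W\in\mathcal{W}$ we have $\tau(M_W)=\tau(W)\cap\Lambda_w$, while $W$ is recovered from $M_W$ as $F_w\cdot M_W$, so the map $W\mapsto M_W$ is $\tilde K_w$-equivariant and injective. Consequently the $\tilde K_w$-orbits on $\mathcal W$ correspond bijectively to the $\tilde K_w$-orbits on the family $\{M_W:W\in\mathcal W\}$ of primitive $\fo_w$-sublattices of $\Lambda_w$. Splitting $\mathcal W$ according to $\dim W$ (at most $n+1$ possibilities) we may assume all $M_W$ have the same rank $m$, and it then suffices to bound the number of $\tilde K_w$-orbits of such primitive sublattices.

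By the local Jordan classification of quadratic $\fo_w$-lattices (with the appropriate refinement at dyadic $w$), the bound $\ord_w(\disc(M_W))\le j$ forces $M_W$ into one of only finitely many isometry classes. Fix a representative $L$ of one such class; the primitive sublattices of $\Lambda_w$ isometric to $L$ correspond, modulo the right action of $O_L(\fo_w)$, to primitive isometric embeddings $\varphi\colon L\hookrightarrow \Lambda_w$, and Witt's theorem places all such embeddings in a single $O_V(F_w)$-orbit. The $O_{\Lambda_w}(\fo_w)$-orbits therefore index a subset of the double coset space $O_{\Lambda_w}(\fo_w)\backslash O_V(F_w)/O_{L^\perp}(F_w)$.

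The core difficulty, and the main obstacle, is the finiteness of this double coset set. I would establish it by a Hensel-type argument: the set $\mathcal E$ of primitive isometric embeddings $L\to\Lambda_w$ is a closed subset of the compact $\fo_w$-module $\mathrm{Hom}_{\fo_w}(L,\Lambda_w)\cong\fo_w^{mn}$, and any two embeddings $\varphi,\psi\in\mathcal E$ that agree modulo a sufficiently large power of the uniformiser differ by an element of $O_{\Lambda_w}(\fo_w)$. Such an element is constructed by taking the connecting isometry $\sigma\in O_V(F_w)$ with $\sigma\varphi=\psi$ supplied by Witt's theorem (which is necessarily close to the identity when $\varphi,\psi$ are $\pi$-adically close) and correcting it, using the action of the orthogonal group of a complement, into an element preserving $\Lambda_w$. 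This openness of orbits on the compact set $\mathcal E$ forces only finitely many $O_{\Lambda_w}(\fo_w)$-orbits; the lifting step is most delicate at dyadic places, where the fine structure of the Jordan decomposition must be exploited.

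Finally, the image of $\tilde K_w$ in $SO_V(F_w)$ under the covering $\Spin_V\to SO_V$ is the kernel of the spinor norm $\theta$ restricted to $SO_{\Lambda_w}(\fo_w)$, and $\theta$ takes values in the finite group $F_w^\times/(F_w^\times)^2$. Hence this image has finite index in $O_{\Lambda_w}(\fo_w)$, so each $O_{\Lambda_w}(\fo_w)$-orbit on primitive sublattices splits into only finitely many $\tilde K_w$-orbits, and the lemma follows.
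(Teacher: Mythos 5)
Your argument follows the paper's proof step for step: the bound on $\ord_w(\disc(M_W))$ leaves only finitely many Jordan decompositions and hence finitely many isometry classes, each class contributes only finitely many $O_{\Lambda_w}(\fo_w)$-orbits of sublattices of $\Lambda_w$, and each such orbit splits into finitely many $\tilde K_w$-orbits because the image of $\tilde K_w$ has finite index in $O_{\Lambda_w}(\fo_w)$ (the spinor norm lands in the finite group $F_w^\times/(F_w^\times)^2$). The only difference is that the paper simply cites Kneser's Satz 30.2 for the middle finiteness statement, whereas you sketch its standard proof (compactness of the set of embeddings of a fixed $L$ into $\Lambda_w$ plus openness of the $O_{\Lambda_w}(\fo_w)$-orbits via $\pi$-adic approximation), which is exactly the argument behind the cited result.
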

\begin{proof}
This is stated and proved (with $\Spin_{\Lambda_w}(\fo_w)$ replaced by
some unspecified compact subset of $\Spin_V(F_w)$) as Lemma 2 in \cite{ev}. We
show here that it is  an immediate consequence of known facts from the
local theory of quadratic  forms:

The boundedness condition on  $\ord_w(\disc(M_W))$ leaves only finitely
many possibilities for the Jordan decomposition of $M_W$ and hence for
the isometry class of $M_W$.
It is well known (see Satz
30.2 of \cite{kneser_buch}, the proof given there for lattices over
$\Z$ applies for $\fo_w$-lattices as well) that for a given $M$ there are only
finitely many $O_{\Lambda_w}(\fo_w)$-orbits of representations of $M$ by
$\Lambda_w$, in other words, only finitely many orbits of sublattices of
$\Lambda_w$ which are isometric to $M$. Obviously, each of these orbits
breaks up into finitely many orbits under the action of
$\Spin_{\Lambda_w}(\fo_w)$. 
\end{proof}
\begin{proposition}\label{claim}
Let $V/F$ be as before and denote as
before by  $\tilde{K}_v$ the compact open subgroup $\Spin_{\Lambda_v}(\fo_v)=\{ \tau \in \Spin_V(F_v)\mid 
\tau(\Lambda_v)=\Lambda_v\}$
for each finite place $v$ of $F$. 

Let $w$ be a fixed finite place of
$F$ and $T_w$ a regular isotropic 
subspace of $V_w=V\otimes F_w$ with $\dim(T_w)\ge 3$. Let
$G_w=\Spin_V(F_w)$, $H_w=\Spin_{T_w}(F_w)$ and 
$$\Gamma :=\Spin_V(F)\cap \Spin_V(F_w)\prod_{v\ne w}\tilde{K}_v.$$
Let a sequence $(W_i)_{i\in \N}$ of subspaces $W_i$ of $V$ (over the
global field $F$) be given such that $(W_i)^\perp_w=\xi_iT_w$ for each $i$
with elements $\xi_i$ from a fixed compact subset of $G_w$.

\vspace{3mm}
Then one has: If no infinite subsequence of the $W_i$ has a nonzero
intersection, the sets $\Gamma\backslash\Gamma\xi_iH_w$ are becoming
dense in $\Gamma\backslash G_w$ as $i\rightarrow \infty$, i.\ e., for every
open subset $U$ of $G_w$ one has $U\cap \Gamma\xi_iH_w\ne \emptyset$ for
sufficiently large $i$.
\end{proposition}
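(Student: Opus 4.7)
The plan is a contraposition: assuming that $\Gamma\xi_iH_w$ fails to become dense in $\Gamma\backslash G_w$, I will construct a nonzero vector of $V$ lying in infinitely many $W_i$. Because $V$ is totally positive definite and $V_w$ contains the isotropic subspace $T_w$ of dimension $\geq 3$, strong approximation for $\Spin_V$ relative to $\{w\}$ (Eichler--Kneser) realises $\Gamma$ as a lattice of cofinite volume in $G_w$, and $H_w$ acts on $\Gamma\backslash G_w$ by right translation with orbits $\Gamma\xi_iH_w$. Identifying $G_w/H_w$ with the $G_w$-orbit of $T_w$ in the Grassmannian of subspaces of $V_w$ of the appropriate isometry type, the coset $\xi_iH_w$ corresponds to $\xi_iT_w=(W_i)^\perp_w$, so the proposition translates into a $\Gamma$-equidistribution statement for the local orthogonal complements of the $W_i$.

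The next step is to invoke a Ratner-type orbit-closure theorem. Since $T_w$ is isotropic of dimension at least $3$, $H_w=\Spin_{T_w}(F_w)$ is almost simple and generated by $F_w$-unipotents, so the non-archimedean form of Ratner's theorem (due to Ratner and to Margulis--Tomanov) applies: each closure $\overline{\Gamma\xi_iH_w}$ has the form $\Gamma\xi_iL_i$ for a closed subgroup $H_w\subseteq L_i\subseteq G_w$ in which $\xi_i^{-1}\Gamma\xi_i\cap L_i$ is a lattice. If $L_i=G_w$ for cofinitely many $i$, density is immediate; otherwise the Dani--Margulis finiteness of intermediate subgroups lets us assume, along a subsequence, that all $L_i$ belong to one conjugacy class. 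The classification of closed subgroups of $\Spin_{V_w}(F_w)$ containing $\Spin_{T_w}(F_w)$ then exhibits each $L_i$ as the stabiliser of a nontrivial orthogonal splitting $V_w=V'_{w,i}\perp T'_{w,i}$ with $T_w\subseteq T'_{w,i}$, acting trivially on $V'_{w,i}\neq 0$.

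To finish, the group $\Gamma_i:=\Gamma\cap\xi_iL_i\xi_i^{-1}$ is an arithmetic lattice in the conjugate $\xi_iL_i\xi_i^{-1}=\Spin_{\xi_iT'_{w,i}}(F_w)$ and is therefore Zariski dense in it by Borel density; hence the $F$-rational subspace $U_i:=W_i\cap V^{\Gamma_i}$ of $V$ satisfies $(U_i)_w\supseteq\xi_iV'_{w,i}\neq 0$ and is nonzero. Because $\xi_i$ stays in a fixed compact subset of $G_w$ and only finitely many isometry types of splittings $V_w=V'_{w,i}\perp T'_{w,i}$ occur, the subspaces $\xi_iV'_{w,i}$ vary in a compact subset of the Grassmannian of $V_w$; a standard height bound for $F$-rational subspaces of $V$ with prescribed local behaviour then leaves only finitely many possibilities for $U_i$, and pigeon-hole produces a single nonzero $U\subseteq V$ with $U\subseteq W_i$ for infinitely many $i$, contradicting the hypothesis.

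The hard part will be the final rational descent: turning the abstract orbit-closure datum $(L_i)$ into a concrete $F$-rational subspace of $V$ that genuinely sits inside each $W_i$, and converting the compactness of the $\xi_i$ into a finiteness statement for the $U_i$ via heights. A secondary concern is the careful verification of the hypotheses of the $p$-adic Ratner and Dani--Margulis theorems in our setting, which is standard for semisimple groups generated by unipotents acting on a finite-volume homogeneous space but would nonetheless need to be documented.
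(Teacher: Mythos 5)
The first thing to say is that the paper does not actually prove this proposition: its ``proof'' is a one-line citation to the Claim in Section 2.5 of \cite{ev}, where the statement is established via the orbit-closure theorems of Ratner and Margulis--Tomanov. Your proposal is therefore a reconstruction of the argument of the cited source rather than an alternative to anything in this paper, and its overall architecture --- contraposition, strong approximation realising $\Gamma$ as a lattice in $G_w$, the non-archimedean Ratner theorem applied to the unipotently generated group $H_w$, and a descent to an $F$-rational subspace contained in infinitely many $W_i$ --- is indeed the route taken there.

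Two of your intermediate assertions are, however, not just undocumented but problematic as stated. First, it is not true that every proper closed subgroup $L_i$ with $H_w\subseteq L_i\subsetneq G_w$ produced by the orbit-closure theorem ``acts trivially on $V'_{w,i}\neq 0$'': the stabiliser of an orthogonal splitting $V_w=V'\perp T'$ with $T_w\subseteq T'$ and both summands of dimension at least $2$ contains $H_w$ yet fixes no nonzero vector of $V_w$, and there may be further intermediate subgroups not of splitting type at all. The rational constraint has to be extracted differently: from the fact that the Zariski closure of $\Gamma\cap\xi_iL_i\xi_i^{-1}$ is a \emph{proper} connected $F$-subgroup of $\Spin_V$ containing $\Spin_{(W_i)^\perp_w}$, one obtains an $F$-rational fixed vector in an auxiliary representation (the linearization method), not necessarily a fixed vector in $V$ itself, and relating this back to the $W_i$ is a genuine step. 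Second, your finiteness argument fails as written: the fact that the $(U_i)_w$ lie in a compact subset of the Grassmannian of $V_w$ imposes no height bound on the $F$-rational subspaces $U_i$, because the $F$-rational points are dense in that Grassmannian; the finiteness actually needed is the Dani--Margulis/Mozes--Shah countability-plus-avoidance statement for closed orbits of bounded volume, which is precisely the nontrivial ergodic-theoretic input of \cite{ev}. In short, the strategy is the right one, but the two steps you defer as ``the hard part'' are where the entire proof lives, and the placeholders you supply for them would not survive in their present form.
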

\begin{proof}
This is stated as {\it Claim} in 2.5 (p. 269) of \cite{ev}, it is
proven there using results of Ratner and Margulis/Tomanov from ergodic
theory.
\end{proof}
\begin{proposition}\label{intersections} 
Let $V/F, Q$ be as before and $\Lambda$ a lattice of
  full rank $n$ on $V$,
let $w$ be a fixed finite place of
$F$. 

Let a sequence $(W_i)_{i\in \N}$ of regular subspaces $W_i$ of
$V$ (over the global field $F$) of dimension $m\le \dim(V)-3$  be
given with the property that no 
infinite subsequence of the $W_i$ has a nonzero intersection and such
that the orthogonal complement of $(W_i)_w$ in $V_w$ is isotropic.

Assume moreover that the $w$-adic additive
valuations $\ord_w(\disc((W_i)_w\cap \Lambda_w))$ remain bounded by
some fixed integer $j$ as $i$ varies.

\vspace{2mm}
Then $\tilde{M}_i=W_i\cap \Lambda$ is represented primitively by all lattices
in the spinor genus 
$\spn(\Lambda)$ for sufficiently large $i$. 
\end{proposition}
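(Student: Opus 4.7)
The plan is to argue by contradiction, using Lemma \ref{p-rep} to normalise the target lattice and Lemma \ref{orbits} to put the family $\{(W_i)_w\}$ into the compact setting required by Proposition \ref{claim}. Since $\spn(\Lambda)$ contains only finitely many isometry classes, if the conclusion fails we may, after passing to a subsequence, fix a single $\Lambda^* \in \spn(\Lambda)$ that fails to primitively represent $\tilde{M}_i$ for every $i$. By Lemma \ref{p-rep} I may further replace $\Lambda^*$ by an isometric lattice with $\Lambda^*_v = \Lambda_v$ for all finite $v \ne w$ and $\Lambda^*_w = g_w \Lambda_w$ for some $g_w \in \Spin_V(F_w)$.

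The hypothesis $\ord_w(\disc((W_i)_w \cap \Lambda_w)) \le j$ lets Lemma \ref{orbits} put the subspaces $(W_i)_w$ into finitely many $\tilde{K}_w$-orbits; after a further subsequence I may write $(W_i)_w = k_i(W_1)_w$ with $k_i \in \tilde{K}_w$. Setting $T_w := (W_1)^\perp_w$, which is isotropic of dimension $\ge 3$ by hypothesis, and $\xi_i := k_i$, one has $(W_i)^\perp_w = \xi_i T_w$ with the $\xi_i$ lying in the compact set $\tilde{K}_w$, so Proposition \ref{claim} applies to the sequence $(W_i)$.

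Applying Proposition \ref{claim} to the open neighbourhood $U := g_w \tilde{K}_w$ of $g_w$ in $G_w$, for all sufficiently large $i$ there exist $\gamma \in \Gamma$, $h \in H_w$ and $k \in \tilde{K}_w$ with $\gamma \xi_i h = g_w k$, so $\gamma_w = g_w k h^{-1}\xi_i^{-1}$. I then check that $\gamma$ realises a primitive representation $\tilde{M}_i \hookrightarrow \Lambda^*$, which contradicts the standing assumption. At $v \ne w$, $\gamma_v \in \tilde{K}_v$ preserves $\Lambda_v = \Lambda^*_v$ and hence the primitivity of $\tilde{M}_{i,v} = (W_i)_v \cap \Lambda_v$ in $\Lambda_v$. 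At $v = w$, using that $\xi_i$ and $k$ preserve $\Lambda_w$ and that $h \in H_w$ acts as the identity on $T_w^\perp$, a short computation gives
$$\gamma_w \tilde{M}_{i,w} \;=\; g_w\bigl(k T_w^\perp \cap \Lambda_w\bigr) \;=\; \gamma_w(W_i)_w \cap \Lambda^*_w,$$
which is exactly the primitivity condition at $w$. The principal difficulty is bookkeeping: one must arrange $T_w$ and $\xi_i$ so that the compactness hypothesis of Proposition \ref{claim} holds while the open set $U$ at $w$ translates cleanly into primitivity, and this rests on the trivial action of $H_w = \Spin_{T_w}(F_w)$ on $T_w^\perp$ together with the $\tilde{K}_w$-invariance of $\Lambda_w$, which absorb the factors $\xi_i^{-1}$, $h^{-1}$ and $k$ appearing in $\gamma_w$.
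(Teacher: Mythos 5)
Your proof is correct and follows essentially the same route as the paper: Lemma \ref{orbits} to reduce to a single $\tilde{K}_w$-orbit, Lemma \ref{p-rep} to normalise the representative of the isometry class, Proposition \ref{claim} applied to $U=g_w\tilde{K}_w$, and the trivial action of $H_w$ on $T_w^\perp$ to absorb the factor $h$. The only (cosmetic) differences are that you argue by contradiction and move $\tilde{M}_i$ by $\gamma$ into a fixed $\Lambda^*$, whereas the paper argues directly and instead moves the lattice, setting $\Lambda'_i=\gamma_i^{-1}\tilde{\Lambda}$ and verifying $W_i\cap\Lambda'_i=\tilde{M}_i$.
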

\begin{proof}
This is stated and proven in \cite{ev} as Proposition 2 with
$j=1$. The proof doesn't change for arbitrary $j$; for the reader's
convenience we go through it and reformulate it slightly. 

As before we put $\tilde{K}_v=\Spin_{\Lambda_v}(\fo_v)$ for all finite places $v$ of $F$
and $$\Gamma :=\Spin_V(F)\cap \Spin_V(F_w)\prod_{v\ne w}\tilde{K}_w.$$

By Lemma \ref{orbits} the $(\tilde{M}_i)_w$ (and hence the $(W_i)_w$) fall into
finitely many orbits under the action of the compact open group
$\tilde{K}_w$, by treating the orbits separately we may therefore
assume that they all belong to the same orbit, i.e., with
$T_w=(W_1)^\perp_w$ we have  $(W_i)^\perp_w=\xi_iT_w$ with $\xi_i \in
\tilde{K}_w$ for all $i$. 

Given an arbitrary isometry class in the spinor genus of $\Lambda$  we
use Lemma \ref{p-rep} to obtain a representative 
$\tilde{\Lambda}\subseteq V$ of that isometry class with
$\tilde{\Lambda}_v=\Lambda_v$ for all finite places $v \ne w$
of $F$ and 
$\tilde{\Lambda}_w=g_w \Lambda_w$ for some $g_w\in G_w$.

By Proposition \ref{claim} for every open set $U\subseteq G_w$ there
is an $i_0$ such that we have $U\cap \Gamma\xi_iH_w\ne \emptyset$
for $i\ge i_0$. 

Taking $$U=g_w\tilde{K}_w\subseteq G_w$$ we obtain
$i_0$ such that for all $i \ge i_0$ one has elements $\gamma_i \in
\Gamma, \eta_i \in H_w, \kappa_i \in \tilde{K}_w$ with
$g_w\kappa_i=\gamma_i\xi_i\eta_i$. The lattice
$\Lambda_i':=\gamma_i^{-1}\tilde{\Lambda}$
is then in the isometry class of $\tilde{\Lambda}$; it satisfies
$(\Lambda_i')_v=\Lambda_v$ for all finite $v \ne w$ and
$$(\Lambda_i')_w=\gamma^{-1}_ig_w\Lambda_w=\xi_i\eta_i\Lambda_w=\xi_i\eta_i\xi_i^{-1}\Lambda_w.$$ 

From this and $\xi_i\eta_i\xi_i^{-1}\vert_{(W_i)_w}={\rm
  Id}_{(W_i)_w}$ we see $\tilde{M}_i=W_i\cap \Lambda'_i$, i.e., we have the
requested primitive representation by a lattice in 
the given isometry class. 
\end{proof}
\begin{proposition}\label{sequence}
Let $(V,Q),\Lambda$ be as before, let $(M_i)_{i \in \N}$ be a sequence
of $\fo$-lattices of rank $m\le n-3$ and assume that with some fixed
$c\in\fo, j\in \N$ and some fixed finite place $w$ of $F$ one has
\begin{enumerate}
\item $M_i$ is represented locally everywhere by $\Lambda$ with
  imprimitivity bounded by $c$ and with isotropic orthogonal
  complement at the place $w$ for all $i$.
\item $\ord_w(\disc((M_i)_w))\le j$ for all $i$.
\item The sequence $(\mu(M_i))_{i\in \N}$ of the minima of the $M_i$ tends to
  infinity.
\end{enumerate}
Then there is an $i_0\in \N$ such that $M_i$ is represented (with
imprimitivity bounded by $c$) by all isometry classes in the genus of
$\Lambda$ for all $i\ge i_0$.
\end{proposition}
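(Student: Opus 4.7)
The plan is to reduce the statement, one isometry class at a time, to Proposition \ref{intersections}. Let $\Lambda^{(1)},\ldots,\Lambda^{(h)}$ be representatives of the (finitely many) isometry classes in $\genus(\Lambda)$; it suffices to fix $k$ and produce an $i_0^{(k)}$ such that $M_i$ is represented by $\Lambda^{(k)}$ with imprimitivity bounded by $c$ for all $i\ge i_0^{(k)}$, and then to set $i_0=\max_k i_0^{(k)}$.

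Fix $k$. Since $\Lambda^{(k)}$ lies in the genus of $\Lambda$, hypothesis (i) also gives local representations of $M_i$ by $\Lambda^{(k)}$ with imprimitivity bounded by $c$ and isotropic orthogonal complement at $w$. Apply Lemma \ref{spinrep} (with $\Lambda^{(k)}$ in place of $\Lambda$) together with Lemma \ref{p-rep}: we obtain a global isometric embedding $\varphi_i:M_i\hookrightarrow V$ and a lattice $\Lambda'_{i,k}\in\spn(\Lambda^{(k)})$ satisfying $(\Lambda'_{i,k})_v=(\Lambda^{(k)})_v$ at every finite place $v\ne w$, with $\varphi_i(M_i)\subseteq\Lambda'_{i,k}$ of imprimitivity bounded by $c$. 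Using the congruence freedom of the Remark following Lemma \ref{spinrep}, prescribe the local representation at $w$ to be the given embedding of $(M_i)_w$ into $(\Lambda^{(k)})_w$; this excludes $w$ from the set $T$ of places needing modification in Lemma \ref{spinrep}'s proof, so that $(\Lambda'_{i,k})_w=(\Lambda^{(k)})_w$ as well and $\varphi_i(M_i)$ lies in $\Lambda^{(k)}$ globally with imprimitivity bounded by $c$.

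Set $W_i:=\varphi_i(FM_i)\subseteq V$ and verify the hypotheses of Proposition \ref{intersections} with $\Lambda$ replaced by $\Lambda^{(k)}$. Each $W_i$ is a regular subspace of dimension $m\le n-3$ with isotropic orthogonal complement at $w$. The containment $\varphi_i(M_i)\subseteq W_i\cap\Lambda^{(k)}$ with imprimitivity bounded by $c$ yields
\[
\varphi_i((M_i)_w)\ \subseteq\ (W_i)_w\cap(\Lambda^{(k)})_w\ \subseteq\ c^{-1}\varphi_i((M_i)_w),
\]
so hypothesis (ii) bounds $\ord_w(\disc((W_i)_w\cap(\Lambda^{(k)})_w))$ uniformly in $i$. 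Finally, no nonzero $v\in V$ can lie in $W_i$ for infinitely many $i$: if so, choosing $N\in\fo\setminus\{0\}$ with $Nv\in\Lambda^{(k)}$ gives $Nv\in W_i\cap\Lambda^{(k)}$ for those $i$, and the global imprimitivity bound $c(W_i\cap\Lambda^{(k)})\subseteq\varphi_i(M_i)$ forces $cNv\in\varphi_i(M_i)\cong M_i$ to be a nonzero element of squared norm $c^2N^2Q(v)$, contradicting hypothesis (iii) once $\mu(M_i)>c^2N^2Q(v)$.

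Proposition \ref{intersections} then supplies $i_0^{(k)}$ such that, for all $i\ge i_0^{(k)}$, the lattice $W_i\cap\Lambda^{(k)}$ is primitively represented by every isometry class in $\spn(\Lambda^{(k)})$, in particular by $\Lambda^{(k)}$ itself; composing this primitive representation with the imprimitivity-$c$ inclusion $\varphi_i(M_i)\hookrightarrow W_i\cap\Lambda^{(k)}$ yields the desired representation of $M_i$ by $\Lambda^{(k)}$ with imprimitivity bounded by $c$. The most delicate point is the arrangement $w\notin T$, i.e., being able to choose the global embedding $\varphi_i$ so that $\varphi_i((M_i)_w)\subseteq(\Lambda^{(k)})_w$ with imprimitivity bounded by $c$; this is exactly where the congruence freedom of the Remark after Lemma \ref{spinrep} is indispensable, for without it the $w$-adic discriminant bound required by Proposition \ref{intersections} could fail.
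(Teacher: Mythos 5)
Your overall skeleton is the paper's: place the $M_i$ inside a fixed lattice, use the bounded imprimitivity together with the growing minima to rule out an infinite subsequence of the $W_i$ with nonzero intersection, bound $\ord_w(\disc((W_i)_w\cap\Lambda_w))$ by hypothesis (ii), and invoke Proposition \ref{intersections}. The last two of these steps are carried out correctly. But the second paragraph contains a genuine error at exactly the delicate point you yourself flag. You cannot arrange $(\Lambda'_{i,k})_w=(\Lambda^{(k)})_w$. First, the congruence refinements are explicitly available only at places $v\ne w$ (see the remark after Lemma \ref{p-rep}): Lemma \ref{p-rep} rests on strong approximation \emph{with respect to} $w$, so all the adelic discrepancy is pushed \emph{into} the place $w$, and no control there is possible. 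Moreover, prescribing a local representation at a place in the proof of Lemma \ref{spinrep} does not keep $\Lambda'_v=\Lambda_v$ there; it \emph{adds} $v$ to the set $T$ of places where the lattice is modified. Second, if your claim were true you would have $\Lambda'_{i,k}=\Lambda^{(k)}$ at every finite place, i.e.\ $\varphi_i(M_i)\subseteq\Lambda^{(k)}$ outright — the full conclusion of the Proposition, obtained from Lemmas \ref{spinrep} and \ref{p-rep} alone, without hypothesis (iii) and without the ergodic input of Propositions \ref{claim} and \ref{intersections}. Since the local-global principle fails for definite forms of small minimum, this is impossible. The error then propagates: without $\varphi_i(M_i)\subseteq\Lambda^{(k)}$ you have no containment $\varphi_i((M_i)_w)\subseteq (W_i)_w\cap(\Lambda^{(k)})_w$, so neither the discriminant bound at $w$ nor the no-common-intersection argument (both of which you run relative to $\Lambda^{(k)}$) is justified.

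The repair is the reduction the paper actually makes, and it does not require landing in $\Lambda^{(k)}$ itself. The conclusion of Proposition \ref{intersections} is primitive representation by \emph{all} classes in the spinor genus of the base lattice, so it suffices to get the $M_i$ into one fixed lattice $\Lambda^{(0)}$ lying in $\spn(\Lambda^{(k)})$ and then read off the class of $\Lambda^{(k)}$ from the conclusion. Lemma \ref{spinrep} gives $M_i\subseteq\Lambda'_i$ with imprimitivity bounded by $c$ for some $\Lambda'_i\in\spn(\Lambda^{(k)})$ depending on $i$; since the spinor genus contains only finitely many isometry classes, you may split the sequence into finitely many subsequences on which the $\Lambda'_i$ all lie in one class, transport $M_i$ into a fixed representative $\Lambda^{(0)}$ of that class by a global isometry, and then apply Proposition \ref{intersections} with base lattice $\Lambda^{(0)}$ (for which your intersection and discriminant arguments go through verbatim). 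Taking the maximum of the resulting indices $i_0$ over the finitely many subsequences and classes finishes the proof.
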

\begin{proof}
By Lemma \ref{spinrep} we may concentrate on lattices in the spinor
genus of $\Lambda$ and assume that all $M_i$ are represented by
$\Lambda$ with
imprimitivity bounded 
by $c$; we may also replace the $M_i$ by their images under these
representations and assume $M_i\subseteq \Lambda$.  
We denote by $W_i$ the $F$-space generated by $M_i$ and assume that
there is an infinite subsequence of the $W_i$ with nonzero
intersection.

If $\x\ne {\bf 0}$ is a vector in this intersection we may assume
(replacing it by a suitable multiple) $\x\in \Lambda$ and hence $\x
\in W_i\cap \Lambda$ for infinitely many $i$. But then the condition
of bounded imprimitivity of the representations of the $M_i$ implies
$c\x \in M_i$ for infinitely many $i$, i.e., we have a vector of
length $N^F_\Q(c^2Q(\x))$ in infinitely many of the $M_i$, which
contradicts the assumption iii) that the minima of the $M_i$ tend to
infinity. 

We have shown that no infinite subsequence of the $W_i$ has a nonzero
intersection, so by Proposition \ref{intersections} there is an $i_0$
such that $W_i\cap \Lambda$ is represented primitively by all lattices
in the spinor genus of $\Lambda$ for all $i \ge i_0$, which implies
the assertion about 
the $M_i$.
\end{proof}
\begin{theorem}
Let $(V,Q), \Lambda$ be as before, fix a finite place $w$ of $F$ and
$j \in \N, c\in \fo$.

Then there exists a constant $C:=C(\Lambda,j,w,c)$ such that $\Lambda$
represents all $\fo$ - lattices $M$ of rank $m\le n-3$ satisfying
\begin{enumerate}
\item $M$ is represented by $\Lambda$ locally everywhere with
  imprimitivity bounded by $c$ and with isotropic orthogonal
  complement at the place $w$.
\item $\ord_w(\disc(M_w))\le j$
\item The minimum of $M$ is $\ge C$.
\end{enumerate}
The representation may be taken to be of imprimitivity bounded by $c$.

The isotropy condition is satisfied automatically if $m\le n-5$ or if
$w$ is such that $\disc(\Lambda_w)$ and $\disc(M_w)$ are units at $w$.   
\end{theorem}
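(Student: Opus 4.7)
The plan is to argue by contradiction and invoke Proposition~\ref{sequence}. Suppose no constant $C=C(\Lambda,j,w,c)$ as in the theorem exists; then for each $i \in \N$ one can pick an $\fo$-lattice $M_i$ of rank $m \le n-3$ satisfying (i) and (ii) with $\mu(M_i) \ge i$, yet not represented by $\Lambda$ with imprimitivity bounded by $c$. The sequence $(M_i)_{i \in \N}$ fulfills all three hypotheses of Proposition~\ref{sequence}: condition~(i) supplies local representability with imprimitivity bounded by $c$ and isotropic orthogonal complement at $w$; condition~(ii) is the uniform bound $\ord_w(\disc((M_i)_w)) \le j$; and $\mu(M_i) \to \infty$ by construction. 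Proposition~\ref{sequence} then yields an $i_0$ such that for every $i \ge i_0$ the lattice $M_i$ is represented with imprimitivity bounded by $c$ by every isometry class in the genus of $\Lambda$, in particular by $\Lambda$ itself, contradicting the choice of the $M_i$.

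For the final sentence I would treat the two cases separately. When $m \le n-5$, the orthogonal complement of (the image of) $M_w$ in $V_w$ has dimension $n-m \ge 5$, and every quadratic space of dimension $\ge 5$ over a nonarchimedean local field is isotropic (see \cite{OM}). When $\disc(\Lambda_w)$ and $\disc(M_w)$ are units at $w$, the image of $M_w$ is a unimodular sublattice of the unimodular lattice $\Lambda_w$ and hence splits off as an orthogonal direct summand; its complement is a unimodular $\fo_w$-lattice of rank $n-m \ge 3$, whose underlying quadratic space is isotropic by reducing modulo the maximal ideal of $\fo_w$, invoking Chevalley--Warning over the residue field, and lifting the isotropic vector via Hensel's lemma.

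The real obstacle is essentially absent at this stage: the ergodic input (Proposition~\ref{claim}) and the spinor-genus combinatorics (Lemmas~\ref{spinrep}, \ref{p-rep} and Proposition~\ref{intersections}) have already been packaged into Proposition~\ref{sequence}, so the theorem is little more than a repackaging of that proposition as a uniform lower-bound statement via the standard contradiction argument. The only point that might require additional care is verifying the automatic-isotropy claim at dyadic places in the unit-discriminant case, but in the intended applications $w$ is chosen so as to sidestep this subtlety.
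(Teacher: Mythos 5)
Your proposal is correct and follows essentially the same route as the paper: the paper's proof is exactly the two-line contradiction argument reducing to Proposition~\ref{sequence}, while the closing remark on automatic isotropy is dismissed there as ``known facts about quadratic spaces over non-archimedean local fields.'' Your elaboration of that remark (dimension $\ge 5$ implies isotropy; unimodular splitting plus Chevalley--Warning and Hensel in the unit-discriminant case) is more detailed than the paper's, and your caveat about dyadic $w$ in the second case is well taken, since e.g. $\langle 1,1,1\rangle$ over $\Z_2$ is unimodular yet anisotropic.
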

\begin{proof}
If the assertion were wrong we could take a sequence of lattices $M_i$
of rank $m$ satisfying i) and ii) above and in addition $\mu(M_i)\ge
i$ such that no $M_i$ is represented by all classes in the genus of
$\Lambda$.
This would contradict the previous proposition.

The final remark about the isotropy condition follows easily from
known facts about quadratic spaces over non-archimedean local fields. 
\end{proof}
{\it Remark.}
\begin{enumerate}
\item All arguments used above remain valid if one imposes additional
  congruence conditions at finite places outside the fixed place $w$
  on the representations, starting out from the versions with
  congruence conditions of Lemma \ref{spinrep} and Lemma \ref{p-rep}
  as indicated in 
  the remarks after these Lemmata and incorporating the congruence
  conditions into the choice of the compact open groups $\tilde{K}_v$
  for the $v \ne w$.
\item The Theorem without the condition on
  $\ord_w(\disc(M))$ and without the condition on bounded
  imprimitivity has been proven in \cite{hkk} by a different method
  for $n\ge 2m+3$. 

\parindent=0pt
We note that for  $n\ge 2m+3$ it is an easy consequence of the local theory of
  lattices that there is a $c \in \fo$ such that a lattice of rank $M$
  which is represented locally everywhere by $\Lambda$ is in fact
  represented locally everywhere by $\Lambda$  with imprimitivity
  bounded by $c$ (see e.\ g. \cite[Theorem 2]{kit_densities}), so that (without
  using \cite{hkk}) we can omit 
  the condition of bounded imprimitivity of the local representations
  if $n\ge 2m+3$ holds. More generally the same is true if the Witt
  index of $V$ is at least $m$.
\item From the point of view of the analytic theory the condition of
  bounded imprimitivity for the local representations is natural: If
  one splits the number of representations of $r(\Lambda, M)$ of $M$
  by $\Lambda$ into the sum of a main term given by Siegel's weighted
  average   $r(\gen(\Lambda),M)$ and an error term, a calculation of
  the local densities shows that  the main term grows only slowly (or
  not at all) if $M$ varies in a sequence of lattices represented
  locally only with growing imprimitivity, see for example \cite[Theorem
  5.6.5 c)]{Ki}. 

\parindent=0pt
The usual method of showing
  that the main term grows faster in absolute value than the error
  term so that the sum 
  must eventually become positive can therefore not work if one
  considers such a sequence, and it even appears plausible that the
  small average number of representations may be due to the fact that
  not all isometry classes do represent the lattices considered.
\item 
 For applications it would sometimes be desirable to have a
  different condition than growing minimum of the lattices to be
  represented, e.\ g.\ growing discriminant plus representability of
  all successive minima. It appears that at least the present method
  is not able to give such a result: If we consider an infinite
  sequence of lattices $M_i \subseteq \Lambda$ whose minimum is
  bounded, there must be infinite subsequences having a nonzero intersection,
  since there are only finitely many vectors of given length in
  $\Lambda$. Propositions \ref{claim} and \ref{intersections} can
  therefore not be applied to such a situation. One can however easily
  adapt the method to the case of extensions of representations of
  some fixed lattice as
  treated in \cite{boe-ra,ckko}, see Corollary \ref{extensions} below. 
\item At least presently there seems to be no way to obtain an
  effective bound on the constant $C$ of the Theorem. The result is
  therefore probably more useful in the form of Proposition
  \ref{sequence}. We note that for $n\ge 2m+3$ the method of
  \cite{hkk} does at least in principle allow to find an effective bound.
\end{enumerate}  
\begin{corollary}\label{extensions}
Let $(V,Q), \Lambda$ be as before, fix a finite place $w$ of $F$ and
$j \in \N, c\in \fo$.

Let $R\subseteq \Lambda$ be a fixed $\fo$-lattice of rank $r$,
$\sigma:R\longrightarrow \Lambda$ a representation of $R$ by
$\Lambda$ and assume that $R_w$ is unimodular.

Then there exists a constant $C:=C(\Lambda,R,j,w,c)$ such that one
has:
If $M\supseteq R$ is an  $\fo$-lattice of rank $m\le n-3$ and
\begin{enumerate}
\item For each place $v$ of $F$ there is a representation
  $\tau_v:M_v\longrightarrow \Lambda_v$ with $\tau_v\vert_{R_v}=\sigma_v$
  with imprimitivity bounded by $c$ and with isotropic orthogonal
  complement in $\Lambda$ at the place $w$
\item $\ord_w(\disc(M_w))\le j$
\item The minimum of $M\cap (FR)^\perp$ is $\ge C$,
\end{enumerate}
then there exists a representation $\tau:M\longrightarrow \Lambda$
with $\tau \vert_{R}=\sigma$.

The representation may be taken to be of imprimitivity bounded by $c$.

The isotropy condition is satisfied automatically if $m\le n-5$ or if
$w$ is such that $\disc(\Lambda_w)$ and $\disc(M_w)$ are units at $w$.   
\end{corollary}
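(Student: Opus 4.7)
The plan is to reduce the statement to the Theorem applied to the orthogonal complement lattice, and then to glue the resulting representation to $\sigma$. Set $V' := (F\sigma(R))^\perp \subseteq V$, $\Lambda' := \sigma(R)^\perp \cap \Lambda$ (a lattice of full rank $n-r$ on $V'$), and $N := M \cap (FR)^\perp$ (a sublattice of $M$ of rank $m-r$; since $M/N$ is torsion-free one has $N_v = M_v \cap (F_vR)^\perp$ for every $v$). Since each $\tau_v$ is $F_v$-linear, extends $\sigma_v$, and is an isometry, it carries $(F_vR)^\perp$ into $\sigma(R_v)^\perp$, so $\tau_v^N := \tau_v|_{N_v}: N_v \to \Lambda'_v$ is a local representation of $N$ by $\Lambda'$ with imprimitivity bounded by $c$. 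Because $R_w$ is unimodular, self-duality of $R_w$ gives the orthogonal decompositions $M_w = R_w \oplus N_w$ and $\Lambda_w = \sigma(R_w) \oplus \Lambda'_w$; in particular $\ord_w(\disc(N_w)) = \ord_w(\disc(M_w)) \le j$, and the isotropy of the orthogonal complement at $w$ transfers from $M$ to $N$.

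Next I would apply the Theorem to $(N,\Lambda')$ (whose dimensions satisfy $m-r \le (n-r)-3$) with the congruence refinement from the first remark following the Theorem. Let $T$ be the finite set of non-archimedean places at which the fixed finite $\fo$-module $Q := \Lambda/(\sigma(R)+\Lambda')$ is supported, and let $s_0$ bound the exponent of $Q$; both $T$ and $s_0$ depend only on $\Lambda,R,\sigma$. Prescribing the local representations $\tau_v^N$ at $v \in T$ with accuracy $v^{s_0}\Lambda'_v$, the Theorem yields a constant $C := C(\Lambda,R,j,w,c)$ such that, whenever $\mu(N) \ge C$, there exists $\tau^N: N \to \Lambda'$ of imprimitivity bounded by $c$ satisfying $\tau^N(n) - \tau_v^N(n) \in v^{s_0}\Lambda'_v$ for all $v \in T$ and $n \in N$.

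To glue, use $FM = FR \perp FN$ and define $\tilde\tau:FM \to V$ on orthogonal summands by $\tilde\tau(r+n) := \sigma(r) + \tau^N(n)$, an isometric embedding extending $\sigma$; what remains is the local condition $\tilde\tau(M_v) \subseteq \Lambda_v$. At $v=w$, and more generally at every $v \notin T$, one has $M_v = R_v \oplus N_v$ and $\sigma(R_v) \oplus \Lambda'_v = \Lambda_v$, so the inclusion is automatic. At $v \in T$, writing $m \in M_v$ as $m = r+n$ with $r \in F_vR$, $n \in F_vN$, one exploits the identity $\tau_v(M_v) \cap (\sigma(R_v) \oplus \Lambda'_v) = \sigma(R_v) \oplus \tau_v(N_v)$: via the inclusion $\tau_v(M_v) \subseteq \Lambda_v$ this exhibits $M_v/(R_v \oplus N_v)$ as a subquotient of $Q_v$, so the denominators of $r$ in $R_v$ and of $n$ in $N_v$ are bounded by $s_0$; combining with $\fo_v$-linearity and the congruence from the previous step, $\tilde\tau(m) - \tau_v(m) = \tau^N(n) - \tau_v^N(n) \in \Lambda'_v \subseteq \Lambda_v$, so $\tilde\tau(m) \in \Lambda_v$.

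The main obstacle is precisely the uniformity of the congruence strength $s_0$ in $M$: although individual discriminants $\ord_v(\disc(M_v))$ for $v \ne w$ are uncontrolled, so the glue $M_v/(R_v \oplus N_v)$ could a priori have arbitrarily large exponent, the inclusion $\tau_v(M_v) \subseteq \Lambda_v$ forces this glue to embed into the fixed finite module $Q$, keeping both $s_0$ and the final constant $C$ independent of $M$.
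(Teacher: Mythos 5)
Your proposal is correct and follows essentially the same route as the paper's proof: restrict to $N=M\cap (FR)^\perp$ inside $\Lambda'=\sigma(R)^\perp\cap\Lambda$, apply the Theorem in its congruence-refined form at the finitely many places where $\Lambda$ does not split as $\sigma(R)\perp\Lambda'$, and paste the resulting representation with $\sigma$. The paper only sketches this (referring to B\"ocherer--Raghavan and Chan--Kim--Kim--Oh and bounding the glue via the index of $R$ in $R^\#$), whereas you bound the glue $M_v/(R_v\oplus N_v)$ by embedding it into the fixed finite module $\Lambda/(\sigma(R)+\Lambda')$; both mechanisms are valid and yield the same uniformity in $M$.
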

\begin{proof} This is proven in the same way as Theorem IV' in
  \cite[p.95]{boe-ra} and Theorem 2.1 in \cite{ckko}, namely by constructing
  a representation $\rho$ of $M\cap (FR)^\perp$ into $\Lambda \cap
  (FR)^\perp$ satisfying suitable congruence conditions at the primes
  dividing the discriminant of one of $\Lambda, R$ and pasting it
  together with the given $\sigma$, the congruence conditions being
  chosen so that $\rho \perp \sigma$ actually maps $M$ into $\Lambda$.
Notice for this that the condition of bounded imprimitivity implies
that the index of  $M\cap (FR)^\perp$ in the orthogonal projection
$\pi(M)$ of $M$ onto $(FR)^\perp$ is bounded independently of $M$ (for
an integral primitive sublattice this index is bounded by the index of $R$ in
its dual lattice $R^\#$). Notice (for comparison with Theorem 2.1 of
\cite{ckko})  that the boundedness of this
index also implies that the quotient of the minimum of  $M\cap
(FR)^\perp$ and the minimum of $\pi(M)$ is bounded by some constant
independent of $M$ so that is relevant only for the size of the
constant  $C$ which of these minima one considers.
\end{proof} 
 Results of Kitaoka allow to lift the condition
  on bounded imprimitivity in some cases with $n<2m+3$ too:
\begin{corollary}\label{kitaoka_cor}
Let $F=\Q$, let $(V,Q), \Lambda$ be as before and  fix a prime $q$
 and 
$j \in \N$.
\begin{enumerate}
\item Let $m \ge 6$ and $n=\dim(V)\ge 2m$.
Then there exists a constant $C:=C(\Lambda,j,q)$ such that $\Lambda$
represents all $\Z$ - lattices $M$ of rank $m$ which are represented
by $\Lambda$ locally everywhere, have minimum $\ge C$  and satisfy
$\ord_q(\disc(M))\le j$. 
\item  Let $m \ge 3$ and $n=\dim(V)\ge 2m+1$. Then there exists a
  constant $C:=C(\Lambda,j,q)$ such that $\Lambda$ 
represents all $\Z$ - lattices $M$ of rank $m$ which are represented
by $\Lambda$ locally everywhere, have minimum $\ge C$, satisfy
${\ord_q(\disc(M))\le j}$ and which are in the case $m=3$ such that the
orthogonal complement of $M_q$ in $\Lambda_q$ is isotropic.
\item  Let $m=2$ and $n=\dim(V)\ge 6$. Then there exists a
  constant $C:=C(\Lambda,j,q)$ such that $\Lambda$ 
represents all $\Z$ - lattices $M$ of rank $m$ which are represented
by $\Lambda$ locally everywhere, have minimum $\ge C$, satisfy
$\ord_q(\disc(M))\le j$ and which are such that the
orthogonal complement of $M_q$ in $\Lambda_q$ is isotropic.
\item Let a positive definite $\Z$-lattice $M_0$ of rank $m\le n-3$ with Gram
  matrix $T_0$ be given. Let $S$ be a finite set of primes with $q\in
  S$ such that one has
\begin{enumerate}
\item $\Lambda_p$ and $M_p$ are unimodular  for all primes $p\not\in
  S$ and for $p=q$.
\item Each isometry class in the genus of $\Lambda$ has a
  representative $\Lambda'$ on $V$ such that $\Lambda_p'=\Lambda_p$
  for all primes $p\not\in S$.
\end{enumerate}
Then there exists a constant $C:=C(\Lambda,T_0,S)$ such that for all
sufficiently large integers $a\in \Z$ which are not divisible by a prime in $S$,
the $\Z$-lattice $M$ with Gram matrix $aT_0$ is represented by
$\Lambda$ if it is represented by all completions $\Lambda_p$.
\end{enumerate}
\end{corollary}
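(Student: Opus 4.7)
The plan is to apply the Theorem above (or, equivalently, Proposition \ref{sequence}) in each of the four parts, after securing a uniform bound $c \in \Z$ on the imprimitivity of the local representations. The existence of such a $c$ will be supplied by Kitaoka's local theory of representations of quadratic lattices \cite{Ki} under the stated dimension and isotropy hypotheses; hypothesis (b) will play an additional role in part (iv). Note that for $a$ coprime to $S$ the lattice $M_a := (M_0, aQ_0)$ has $\mu(M_a) = a\mu(M_0) \to \infty$ and $\ord_q(\disc((M_a)_q)) = 0$ by (a) and $q \nmid a$, so the minimum and $q$-discriminant conditions of the Theorem will be automatic.

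For parts (i)--(iii), let $S_0$ denote the finite set of primes dividing $2\disc(\Lambda)$. At $p \notin S_0$ the lattice $\Lambda_p$ is unimodular, and the condition $n - m \ge 3$ forces by elementary local considerations (cf.\ \cite[\S82]{OM}) that any locally represented rank-$m$ lattice embeds with imprimitivity bounded by $1$. At the finite set $S_0$, Kitaoka's local representation theorems \cite[Ch.\ 5]{Ki} apply: the dimension hypothesis of each part ($n \ge 2m$ with $m \ge 6$, $n \ge 2m+1$ with $m \ge 3$, or $n \ge 2m+2$ with $m = 2$), combined with the explicit isotropy condition at $q$ in the borderline cases of small $m$, is exactly what is needed to ensure that any rank-$m$ local representation into $\Lambda_p$ can be taken with imprimitivity bounded by some $c_p$ depending only on $\Lambda_p$ and $m$. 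Setting $c := \prod_{p \in S_0} c_p$ and invoking the Theorem with $w = q$ and the given $j$ yields the required constant $C$.

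For part (iv), I would apply Proposition \ref{sequence} to the sequence $M_a$ with $a \in \N$ coprime to $S$ and $a \to \infty$, taking $w = q$, $j = 0$, and imposing congruence conditions at the primes of $S \setminus \{q\}$ that fix once-and-for-all local representations $\sigma_p : (M_0)_p \hookrightarrow \Lambda_p$, as permitted by the remarks after Lemmata \ref{spinrep}, \ref{p-rep}, and the Theorem. The orthogonal complement of $(M_a)_q$ in $\Lambda_q$ is a unimodular $\Z_q$-lattice of rank $n-m \ge 3$, hence isotropic (for $q$ odd, or for $q = 2$ via $n-m \ge 5$ or a separate argument using (a)). For bounded imprimitivity one argues prime by prime: at $p \in S$ the congruence condition reduces to the fixed $\sigma_p$, giving bound $c_p$; at $p \notin S$ with $p \nmid a$, the lattice $(M_a)_p$ is unimodular and the representation primitive; at $p \notin S$ with $p \mid a$, hypothesis (b) will be used to replace the genus representative of $\Lambda$ by one whose local structure outside $S$ is unchanged but whose local embedding of $(M_a)_p$ has, by the unimodularity in (a) together with Kitaoka's local density analysis \cite[Theorem 5.6.5]{Ki}, imprimitivity bounded independently of $a$.

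The principal obstacle is precisely this last step: controlling the imprimitivity at primes $p \notin S$ with $p \mid a$, where a naive isometric embedding of the $p^{v_p(a)}$-modular lattice $(M_a)_p$ into the unimodular $\Lambda_p$ forces imprimitivity growing like $p^{m v_p(a)/2}$, which is incompatible with the uniform bound required by Proposition \ref{sequence}. Hypothesis (b) is essential here, as it provides the flexibility in the adelic description of $\gen(\Lambda)$ needed to absorb this growing imprimitivity into a uniform bound depending only on $\Lambda$ and $M_0$. Making this rigorous will require carefully tracking, via Kitaoka's local densities, the comparison of local representations of $M_a$ and of the fixed lattice $M_0$ at varying $p \mid a$, and exploiting the coincidence of $\Lambda'_p$ with $\Lambda_p$ for $p \notin S$ granted by (b) to match global representatives to the chosen local data.
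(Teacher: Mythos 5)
Your proposal has a genuine gap, and it is located exactly where the whole difficulty of this corollary lies. For parts (i)--(iii) you claim that at primes $p$ not dividing $2\disc(\Lambda)$ the unimodularity of $\Lambda_p$ together with $n-m\ge 3$ forces every locally represented rank-$m$ lattice to embed with imprimitivity bounded by $1$ (and with some uniform $c_p$ at the bad primes). This is false: the hypotheses of the corollary bound $\ord_q(\disc(M))$ only at the single prime $q$, so $M$ may be, say, $p^k N$ for a fixed $N$ and large $k$ at some other prime $p$. Since a primitive rank-$m$ sublattice of a unimodular $\Lambda_p$ has bounded discriminant, the index of $M_p$ in $F_pM_p\cap\Lambda_p$ — hence the imprimitivity — grows without bound with $k$. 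This is precisely the phenomenon you yourself identify as the ``principal obstacle'' in part (iv); it is equally present in parts (i)--(iii), and no choice of a uniform $c$ can absorb it. As the paper's own remarks point out, bounded imprimitivity of local representations is automatic only for $n\ge 2m+3$ (or Witt index $\ge m$), whereas the corollary deliberately treats $n\ge 2m$, $2m+1$, $2m+2$.

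The missing idea is that one should not try to bound the imprimitivity of the local representations of $M$ at all. Instead, the paper invokes Kitaoka's results (\cite{kit_densities,kit_nagoya115,kit_nagoya133,kit_nagoya141}) establishing the condition ${\rm\bf R_{m,n}}$: for every $C_1$ there is a $C_2$ such that any $M$ locally represented by $\Lambda$ with $\mu(M)\ge C_2$ is contained in a lattice $M'\supseteq M$ of the same rank $m$ with $\mu(M')\ge C_1$ that is represented by $\Lambda$ locally everywhere \emph{primitively}. One then applies the Theorem to $M'$ with $c=1$, obtaining a global (primitive) representation of $M'$ and a fortiori a representation of $M$. Hypotheses (a) and (b) of part (iv) are the hypotheses under which Kitaoka proves ${\rm\bf R_{m,n}}$ for scalings $aT_0$ of a fixed form; they are not, as you suggest, a device for matching adelic genus representatives to local embeddings of growing imprimitivity. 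Your reduction of the minimum and $q$-discriminant conditions in part (iv) is fine, but without the passage to the superlattice $M'$ the argument cannot be completed in any of the four parts.
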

\begin{proof}
We notice first that the conditions in the various cases imply that
the orthogonal complement of $M_q$ in $\Lambda_q$ is isotropic.

In the articles \cite{kit_densities,kit_nagoya115,kit_nagoya133,kit_nagoya141}
Kitaoka proved that the following condition ${\rm \bf R_{m,n}}$  is satisfied
for the values $m,n$ given in the assertions (with $M$ restricted to
scalings of a fixed $M_0$ as described above in case iv)):
\begin{quote}
{\it ${\rm \bf (R_{m,n})}$: For any $C_1>0$ there exists a $C_2>0$ such that the
  following is true: If $M$ is represented locally everywhere by
  $\Lambda$ and satisfies $\mu(M)\ge C_2$, there exists a lattice
  $M'\supseteq M$ of rank $m$ with minimum $\mu(M')\ge C_1$ which is
  represented by $\Lambda$ locally everywhere primitively.}
\end{quote}
Since for $C_1$ large enough the Theorem asserts that $M'$ is
represented globally primitively by $\Lambda$, we obtain the
representation of $M$ by $\Lambda$ if $C$ in the Corollary is $\ge C_2$. 
\end{proof}

For the convenience of more matrix oriented readers we conclude by
giving a matrix version (not in ``geometrischer Einkleidung'', see
\cite{siegel}) of the main result in the case $F=\Q$:

\begin{theorem}\label{matrixversion}
Let $S\in M_n^{\rm sym}(\Z)$ be a positive definite integral  symmetric
$n\times n$-matrix, fix a prime $q$ and positive integers $j,c$.

Then there is a constant $C$ such that a positive definite matrix $T \in
M_m^{\rm sym}(\Z)$ with $m\le n-3$ is represented by $S$ (i.e., 
$T={}^t\!XSX$ with $X \in M_{nm}^{\rm sym}(\Z)$) provided it satisfies:
\begin{enumerate}
\item For each prime $p$ there exists a matrix $X_p \in M_{nm}(\Z_p)$ with
  ${}^t\!X_pSX_p=T$ such that 
  the  elementary divisors of $X$ divide $c$ and such that the equations
${}^t\!X_qS\y={\bf 0}$ and ${}^t\!\y S\y=0$ have a nontrivial common
solution $\y \in \Z_q^n$  
\item $q^j \nmid \det(T)$
\item $\min\{{}^t\!\y T\y \mid {\bf 0}\ne \y\in  \Z^m\}>C$
\end{enumerate} 
The matrix $X$ may be chosen to have elementary divisors dividing $c$.
\end{theorem}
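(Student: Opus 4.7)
The statement is a direct translation of the main Theorem into matrix language, so the plan is to set up a dictionary between $n\times m$ integer matrices and isometric embeddings of lattices, transfer the four hypotheses, and invoke the Theorem with $F=\Q$ and $w=q$. I take $\Lambda$ to be $\Z^n$ equipped with the quadratic form $Q(\x)={}^t\!\x S\x$ and $M$ to be the rank-$m$ $\Z$-lattice with Gram matrix $T$; a matrix $X_p\in M_{nm}(\Z_p)$ with ${}^t\!X_pSX_p=T$ is the same datum as an isometric embedding $\varphi_p\colon M_p\longrightarrow \Lambda_p$, $\y\mapsto X_p\y$, and similarly a global $X$ corresponds to a global representation.

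The only step requiring care is the equivalence between \emph{the elementary divisors of $X_p$ divide $c$} and \emph{$\varphi_p$ has imprimitivity bounded by $c$}. I would verify this by Smith normal form: write $X_p$ as a product of an element of $GL_n(\Z_p)$, an $n\times m$ matrix whose leading $m\times m$ block is diagonal with the elementary divisors $e_1\mid\cdots\mid e_m$, and an element of $GL_m(\Z_p)$. Then $\varphi_p(M_p)$ embeds in its primitive hull $\varphi_p(W'_p)\cap \Lambda_p$ with invariant factors exactly $e_1,\dots,e_m$, so the condition $c\cdot(\varphi_p(W'_p)\cap\Lambda_p)\subseteq\varphi_p(M_p)$ is equivalent to $e_i\mid c$ for each $i$. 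The identical computation carried out over $\Z$ (which is still a PID) lets one read off elementary divisors of the global $X$ from the imprimitivity bound of the conclusion.

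The remaining translations are immediate. The isotropy clause in (i) says precisely that $\varphi_q(M_q)^\perp\subseteq V_q=\Q_q^n$ contains a nonzero isotropic vector, since any such vector in $\Q_q^n$ can be scaled into $\Z_q^n$; this is the isotropy of the orthogonal complement at $w=q$ required by the main Theorem. Condition (ii) gives $\ord_q(\disc(M_q))\le j-1\le j$, modulo a convention-dependent shift that can be absorbed into $j$. Condition (iii) is $\mu(M)>C$, because for $F=\Q$ the norm $N^F_\Q$ is the identity and the two definitions of the minimum mentioned after the definition of $\mu$ coincide. With the dictionary in place the main Theorem applies once $C$ is chosen large enough depending on $\Lambda$, $j$, $q$, and $c$, and yields a representation of $M$ by $\Lambda$ with imprimitivity bounded by $c$; translated back this is exactly the desired $X\in M_{nm}(\Z)$ with ${}^t\!XSX=T$ and elementary divisors dividing $c$. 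I do not see any serious obstacle here beyond the Smith-normal-form verification, which is routine.
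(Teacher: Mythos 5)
Your proposal is correct and matches the paper's intent exactly: the paper states this theorem without proof as a direct matrix-language reformulation of the main Theorem for $F=\Q$, $w=q$, and your dictionary (lattice $\Z^n$ with $Q(\x)={}^t\x S\x$, Smith normal form identifying elementary divisors of $X_p$ with the imprimitivity bound, the common solution $\y$ giving isotropy of the orthogonal complement at $q$) is precisely the translation the author leaves to the reader.
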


\enlargethispage{8mm}
\vspace{0.1cm}
Author's address:
Rainer Schulze-Pillot,
Fachrichtung 6.1 Mathematik,\\
Universit\"at des Saarlandes (Geb. E2.4),
Postfach 151150, 66041 Saarbr\"ucken, Germany\\
email: schulzep@math.uni-sb.de

\end{document}